\documentclass[12pt]{article}
\addtolength{\textwidth}{2cm} \addtolength{\oddsidemargin}{-1cm}
\addtolength{\textheight}{2.9cm} \addtolength{\topmargin}{-1cm}
\usepackage[centertags]{amsmath}
\usepackage{amsfonts}
\usepackage{amssymb}
\usepackage{amsthm}
\usepackage{mathrsfs}
\usepackage{newlfont}
\usepackage{graphicx}
\hfuzz2pt

\newtheorem{Theorem}{Theorem}[section]

\newtheorem{Remark}{Remark}[section]

\theoremstyle{definition}
\newtheorem{Definition}{Definition}[section]
\theoremstyle{remark}


\begin{document}

\title{Stability and convergence of a higher order rational
difference Equation}
\author{{Hamid Gazor\footnote{ {\rm Corresponding author}\textit{E-mail address}: h.gazor@iaumajlesi.ac.ir } , Saeed Parvandeh\footnote{ \textit{E-mail address}:}}}
\date{}
\maketitle

\begin{center}
\textit{\scriptsize Department of Mathematics, Majlesi Branch, Islamic Azad
University, Isfahan, Iran.}\\[0pt]
\end{center}
\begin{abstract}
In this paper the asymptotic stability of equilibria and periodic
points of the following higher order rational difference Equation
$$x_{n+1}=\frac{\alpha x_{n-k}}{\beta +\gamma  x_{n}x_{n-1} \ldots x_{n-k}}, \  k\geq 1,\ n=0,1,\ldots $$
is studied where the parameters $\alpha ,\beta , \gamma $ are
positive real numbers, and the initial conditions $x_{-k},\ldots
,x_{0}$ are given arbitrary real numbers. The forbidden set of
this equation is found and then, the order reduction method is
used to facilitate the analysis of its asymptotic dynamics.

{\em Keywords}: Difference Equation; Equilibrium point; periodic
point; convergence; semiconjugacy
\end{abstract}

\section{Introduction and preliminaries}

Difference equations may appear as solutions of various phenomena
or as a discretized system of delay or non-delay differential
equations. They are very important in both theory and
applications; for applications in biology (see \cite{BC}),
economics (see \cite{LM,IS}), medical sciences (see \cite{MK}),
military sciences (see \cite{Epstein}). The main goal in the
study of difference equations is to understand the asymptotic
behavior of solutions rather than trying to find an explicit
formula for solutions. This basically is not only because the
explicit solutions are hard to find but also the explicit
solutions may still represent a complex dynamics and they yet may
require a qualitative analysis to understand their dynamics.

The ratio of any two polynomials of a recursive sequence is
called a rational difference equation. Rational difference
equations are important as practical classes of difference
equations. Most of the works in the literature of rational
difference equations have treated the first and second order
difference equations. For second order rational difference
equations we refer the reader to the monograph of Kullenovic and
Ladas (\cite{KL}). In this paper we study the dynamics of the
following \((k+1)\)-order rational difference equation

\begin{eqnarray}\label{E1}
x_{n+1}&=&\frac{\alpha x_{n-k}}{\beta +\gamma  x_{n}x_{n-1} \ldots
x_{n-k}}, \ \ \  n=0,1,\ldots
\end{eqnarray}
where $k\geq 1$ and the parameters $\alpha ,\beta ,\gamma $ are
positive. We allow the initial conditions $x_{-k},x_{-k+1},\ldots
x_{0}$ to take any arbitrary value out of the forbidden set of
equation (\ref{E1}). Let $I$ be some interval of positive real
numbers and $f:I^{k+1}\rightarrow I$ be a continuously
differentiable function. Then for every initial conditions
$(x_{-k},x_{-k+1}, \ldots ,x_{0})\in I^{k+1}$, the difference
equation
\begin{eqnarray}\label{E2}
x_{n+1}=f(x_{n},x_{n-1},\ldots ,x_{n-k}), \ \ \ n=0,1,\ldots
\end{eqnarray}
has a unique solution $\{x_{n}\}_{n=-k}^{\infty }$. The point
$\overline{x}\in I$ is called an equilibrium of
equation~(\ref{E2}) (or simply an equilibrium of $f$) if
$f(\overline{x},\overline{x},\ldots ,\overline{x})=\overline{x}$,
i.e., $x_{n}=\overline{x}$ for all $n\geq 0$ (such a solution is
also called a trivial solution). The point $(c_{1},\ldots
,c_{k+1})$ is called a $(k+1)$-cycle if $x_{(k+1)n-i}=c_{k+1-i}$
for all $i=0,1,\ldots ,k$. In this case we say that $\{x_{n}\}$
is periodic with period $(k+1)$. The linearized equation
associated with equation~(\ref{E2}) about the equilibrium
$\overline{x}$ is
\begin{eqnarray*}
z_{n+1}=\sum _{i=0}^{k}\frac{\partial f}{\partial u_{i}}(\overline{x},\ldots \overline{x})z_{n-i},
\end{eqnarray*}
and its corresponding characteristic equation is defined by
\begin{eqnarray}\label{E3}
\lambda ^{k+1}-\sum _{i=0}^{k}\frac{\partial f}{\partial
u_{i}}(\overline{x},\ldots \overline{x})\lambda ^{k-i}=0.
\end{eqnarray}
An equilibrium $\overline{x}$ of equation(\ref{E2}) is called
locally stable if for every $\epsilon >0,$ there exists $\delta
>0$ such that for the solution $\{x_{n}\}_{n=-k}^{\infty }$ of
equation~(\ref{E2}) with
$|x_{-k}-\overline{x}|+|x_{-k+1}-\overline{x}|+\ldots+|x_{0}-\overline{x}|<\delta
$ we have $|x_{n}-\overline{x}|<\epsilon $ for all $n\geq 1$.
Furthermore, if there exists $\gamma >0$ such that for the
solution $\{x_{n}\}_{n=-k}^{\infty }$ of equation~(\ref{E2}) with
$|x_{-k}-\overline{x}|+|x_{-k+1}-\overline{x}|+\ldots
+|x_{0}-\overline{x}|<\gamma$ we have $\lim _{n\rightarrow \infty
}x_{n}=\overline{x}$, then $\overline{x}$ is called locally
asymptotically stable. Linearized stability theorem indicates
that if all roots of equation~(\ref{E3}) lie inside the open unit
disk $|\lambda |<1$, then the equilibrium $\overline{x}$ of
equation~(\ref{E2}) is locally asymptotically stable. If at least
one of the roots of equation~(\ref{E3}) has modulus greater than
one, then the equilibrium $\overline{x}$ of equation~(\ref{E2}) is
unstable, see {\rm e.g.,}  \cite{KL1}.

$\overline{x}$ is called a global attractor on an interval $I$ if
for every solution $\{x_{n}\}_{n=-k}^{\infty }$ of
equation~(\ref{E2}) with $x_{-k},x_{-k+1},\ldots, x_{0}\in I,$ we
have $\lim _{n\rightarrow \infty }x_{n}=\overline{x}$. When
\(\overline{x}\) is locally stable and a global attractor, we
call it globally asymptotically stable. The forbidden set of
equation~(\ref{E2}) is the set of all (k+1)-tuples $(f_{1},f_{2},
\ldots f_{k+1}),$ where they can not be taken as the initial
conditions for an infinite well-defined sequence \(\{x_n\}\) on
its domain. In other words, the forbidden set of
equation~(\ref{E2}) is the set of all initial conditions such
that arbitrary iterations of the right hand side of
equation~(\ref{E2}) are not well-defined.

The following definitions, lemma, corollary, and theorems are
needed to study the global behavior of solutions of
equation~(\ref{E1}).

\begin{Definition}(for original ideas see \cite{E,S}) \label{D2} Consider equation~(\ref{E2}) and assume that $D\subseteq \Bbb{R}^{k+1}$ is nonempty.
\begin{description}
    \item[\it{(i)}] Suppose that $(x_{1},\ldots x_{k+1})\in \Bbb{R}^{k+1}$. Then $\| x\| =\max \{x_{1},\ldots ,x_{k+1}\}$ denotes the sup-norm
    of $x$. Also, if $\overline{x}$ is the equilibrium of $f$ then
    $\overline{X}=(\overline{x}, \ldots ,\overline{x})\in
    \Bbb{R}^{k+1}$ is an equilibrium of its {\em vectorization} that is
    \begin{equation*}
        V_{f}(x_{1},\ldots x_{k+1})=(f(x_{1},\ldots x_{k+1}), x_{1},\ldots ,x_{k}).
    \end{equation*}
    \item[\it{(ii)}] If there is a non-constant function $H\in
    C(D,\Bbb{R})$ such that $H\circ V_{f}=\phi\circ H$ on $D$ for some $\phi \in
    C(H(D),\Bbb{R})$ then $V_{f}$ is also called a $(D,H,\phi )$-{\em semiconjugate} of $\Bbb{R}^{k+1}$. The mapping $H$ is called a {\em link map} and $\phi $ is called the {\em factor map.} For each $t\in
    H(D)$, the level set $H^{-1}(t)\cap D$, abbreviated as
    $H^{-1}_{t}$ is called a {\em fiber} of $H$ in $D$.
    \item[\it{(iii)}] A continuous mapping $h:D\rightarrow \Bbb{R}$
    is said to be {\em bending} at a point $x\in D$ if $x$ is not an
    isolated point in $D$ and $x\not\in [h^{-1}(h(x))]^{\circ} \cap
    D$, where $S^{\circ}$ denotes the interior of $S$.
    \item[\it{(iv)}] An equilibrium $\overline{x}$ of equation~(\ref{E2})
    is exponentially stable under $f$ relative to some nontrivial
    interval $I$ containing $\overline{x}$ if there is $\gamma
    \in(0,1)$ such that for every solution $\{x_{n}\}$ of
    equation~(\ref{E2}) with initial values $x_{-k},\ldots ,x_{0}\in I$ we
    have for all $n\geq 1$ that $|x_{n}-\overline{x}|<c\gamma ^{n}$ where
    $c=c(x_{0},\ldots x_{-k})$ is independent of $n$.
    \item[\it{(v)}] An equilibrium $\overline{x}$ of a map $f:\Bbb{R}\rightarrow
    \Bbb{R}$ is semistable (from the right) if for any $\epsilon >0$
    there exists $\delta >0$ such that if $0<x_{0}-\overline{x}<\delta
    $ then $|f^{n}(x_{0})-\overline{x}|<\epsilon $ for all $n\geq
    1$. If in addition, $\lim _{n\rightarrow \infty
    }f^{n}(x_{0})=\overline{x}$ whenever $0<x_{0}-\overline{x}<\gamma
    $ for some $\gamma >0$, then $\overline{x}$ is said to be
    semiasymptotically stable (from the right). Semistability (semiasymptotic
    stability) from the left is defined analogously.
\end{description}
\end{Definition}



\begin{Theorem}\label{T2} (see \cite{S}). Let $V_{f}$ is a $(D,H,\phi )$-semiconjugate map and $\overline{x}\in
D$ is an equilibrium of $V_{f}$.
\begin{description}
    \item[\it{(i)}] $\overline{t}=H(\overline{x})$
is an equilibrium of $\phi $.
    \item[\it{(ii)}] (Boundedness). Assume that $|H(x)|\rightarrow \infty $ as $\| x\| \rightarrow \infty $.
    If the sequence $\{\phi ^{n}(t_{0})\}$ is bounded for some $t_{0}\in
    H(D)$, then the sequence $\{V_{f}^{n}(x_{0})\}$ with $x_{0}\in
    H^{-1}_{t_{0}}$ is bounded.
    \item[\it{(iii)}] (Stability and instability). Assume that $H$ is
    bending at $\overline{x}$. If we set
    $\overline{t}=H(\overline{x})$ then
    \begin{description}
        \item[\it{(a)}] If $\overline{x}$ is stable (asymptotically
        stable) under $V_{f}$, then $\overline{t}$ is a stable (asymptotically
        stable) equilibrium of $\phi $.
        \item[\it{(b)}] If $\overline{t}$ is unstable under $\phi $,
        then $\overline{x}$ is unstable under $V_{f}$.
     \end{description}
    \item[\it{(iv)}] (Attractivity of invariant fibers). Let $\overline{t}\in I$ be an isolated equilibrium of $\phi $ which attracts all points in $I$. If $x_{0}\in D\cap H^{-1}(I)$ and
    $\{V_{f}^{n}(x_{0})\}$ is bounded, then $\{V_{f}^{n}(x_{0})\}$ converges to the invariant fiber $H^{-1}(\overline{t})$.
\end{description}
\end{Theorem}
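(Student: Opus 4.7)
My plan is to work through parts~(i)--(iv) in order, using the semiconjugacy identity $H\circ V_f=\phi\circ H$ as the sole structural tool and invoking the bending hypothesis only for the local regularity needed in part~(iii).

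Part~(i) should follow by evaluating the identity at the equilibrium: $\phi(\overline{t})=\phi(H(\overline{x}))=H(V_f(\overline{x}))=H(\overline{x})=\overline{t}$. For part~(ii), iteration gives $H(V_f^n(x_0))=\phi^n(t_0)$, so boundedness of $\{\phi^n(t_0)\}$ passes to $\{H(V_f^n(x_0))\}$, and the coercivity hypothesis $|H(x)|\to\infty$ as $\|x\|\to\infty$ will then preclude any subsequence of $\{V_f^n(x_0)\}$ from escaping to infinity.

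For part~(iii)(a), given $\epsilon>0$, I would use continuity of $H$ at $\overline{x}$ to choose $\eta>0$ with $H(B(\overline{x},\eta)\cap D)\subset(\overline{t}-\epsilon,\overline{t}+\epsilon)$, then use stability of $\overline{x}$ under $V_f$ to pick $\delta_1>0$ so that orbits starting in $B(\overline{x},\delta_1)$ remain in $B(\overline{x},\eta)$; applying $H$ then yields $|\phi^n(H(x_0))-\overline{t}|<\epsilon$. To convert this into a scalar $\delta$ in the definition of stability of $\overline{t}$ under $\phi$, I would invoke bending: since $\overline{x}$ lies outside the interior of $H^{-1}(\overline{t})\cap D$, every neighborhood of $\overline{x}$ contains points where $H\neq\overline{t}$, and continuity makes $H(B(\overline{x},\delta_1)\cap D)$ a nondegenerate connected subset of $H(D)$ containing $\overline{t}$, hence a (possibly one-sided) neighborhood of $\overline{t}$ in the relative topology. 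The asymptotic-stability clause combines this with $H(V_f^n(x_0))\to\overline{t}$, immediate from continuity of $H$ and $V_f^n(x_0)\to\overline{x}$. Part~(b) is then the contrapositive of the stability assertion in~(a).

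For part~(iv), the hypothesis $x_0\in D\cap H^{-1}(I)$ places $t_0=H(x_0)\in I$, so attraction of $\overline{t}$ in $I$ gives $\phi^n(t_0)\to\overline{t}$, and therefore $H(V_f^n(x_0))\to\overline{t}$. Using boundedness of $\{V_f^n(x_0)\}$, I would extract convergent subsequences; continuity of $H$ forces every cluster point $y$ to satisfy $H(y)=\overline{t}$, placing it on the closed fiber $H^{-1}(\overline{t})$, and a standard compactness argument then yields $d(V_f^n(x_0),H^{-1}(\overline{t}))\to 0$. The step I expect to be hardest is the bending argument in~(iii)(a): translating ``$H$ is not locally constant at $\overline{x}$'' into the assertion that the image of a small ball around $\overline{x}$ is a genuine scalar neighborhood of $\overline{t}$ may force a semistability (one-sided) reading in the sense of Definition~\ref{D2}(v).
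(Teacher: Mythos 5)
The paper does not prove this theorem at all: it is imported verbatim from Sedaghat's monograph \cite{S} and used as a black box, so there is no in-paper argument to compare against. Judged on its own terms, your reconstruction is essentially the standard proof and is sound: (i) is the one-line computation $\phi(\overline{t})=H(V_f(\overline{x}))=\overline{t}$; (ii) follows from iterating $H\circ V_f^n=\phi^n\circ H$ together with coercivity of $H$; (iv) is the usual subsequence/compactness argument, and you correctly observe that the cluster points land on the fiber because $H(V_f^n(x_0))=\phi^n(t_0)\to\overline{t}$. You have also put your finger on the only genuinely delicate step, namely converting stability of $\overline{x}$ into stability of $\overline{t}$ in part (iii)(a). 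One caveat there: your argument that $H\bigl(B(\overline{x},\delta_1)\cap D\bigr)$ is a connected, nondegenerate set containing $\overline{t}$ silently assumes that $B(\overline{x},\delta_1)\cap D$ is connected, which the theorem's hypotheses (merely $D\subseteq\Real^{k+1}$ nonempty) do not guarantee; bending only ensures the image is not the singleton $\{\overline{t}\}$. For the application in this paper the issue is harmless, since $D=[0,\infty)^{k+1}$ is convex and the intersection with any ball is convex, hence connected, so the image is an interval and therefore a (possibly one-sided) relative neighborhood of $\overline{t}$ in $H(D)$ --- exactly the reading you anticipate. With that connectivity hypothesis made explicit (or with $D$ convex, as here), your outline fills in to a complete proof, and part (iii)(b) is indeed just the contrapositive of (iii)(a).
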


\begin{Theorem}\label{T3}(see \cite{E}). Let $\overline{x}$ be an equilibrium of $f:\Bbb{R}\rightarrow \Bbb{R}$,
$f'(\overline{x})=1$, and $f''(\overline{x})\neq 0$. Then
$\overline{x}$ is semiasymptotically stable from the right (left)
if $f''(\overline{x})<0$ ($f''(\overline{x})>0$).
\end{Theorem}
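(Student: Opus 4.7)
The plan is to perform a local Taylor expansion at the non-hyperbolic fixed point and then extract monotonicity of the iterates. I shall describe the case $f''(\overline{x}) < 0$; the other case is symmetric. Set $u = x - \overline{x}$, $b = -f''(\overline{x})/2 > 0$, and use $f(\overline{x}) = \overline{x}$ together with $f'(\overline{x}) = 1$ to write
\[
g(u) := f(\overline{x}+u) - \overline{x} = u - b u^2 + r(u), \qquad r(u) = o(u^2).
\]
Then I would fix $\eta > 0$ small enough that both $|r(u)| \leq (b/2)u^2$ for $|u| \leq \eta$ and $\eta < 2/(3b)$. With these choices, for every $u$ with $0 < u \leq \eta$,
\[
u - \tfrac{3b}{2}u^2 \;\leq\; g(u) \;\leq\; u - \tfrac{b}{2}u^2.
\]

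Next, I would use these two-sided bounds to run an induction. The right-hand inequality gives $g(u) < u$, and the left-hand inequality combined with $\eta < 2/(3b)$ gives $g(u) \geq u(1 - \tfrac{3b}{2}\eta) > 0$. Hence whenever $0 < u_0 \leq \eta$, the orbit $u_n := g^n(u_0)$ is a strictly decreasing, positive sequence bounded above by $u_0$. This establishes semistability from the right at once: given any $\epsilon > 0$, take $\delta = \min(\epsilon, \eta)$.

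For semiasymptotic stability, monotone convergence supplies a limit $u_n \downarrow L \in [0,\eta]$. Passing to the limit in $u_{n+1} \leq u_n - (b/2)u_n^2$ forces $(b/2) L^2 \leq 0$, so $L = 0$; equivalently $f^n(x_0) \to \overline{x}$. The case $f''(\overline{x}) > 0$ is handled by the mirror argument: with $b = f''(\overline{x})/2 > 0$ one obtains $g(u) = u + b u^2 + o(u^2)$, and for small $u < 0$ the orbit is a negative, strictly increasing sequence converging to $0$, yielding semiasymptotic stability from the left.

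There is no genuine obstacle in the proof; the only delicate point is the quantitative control of the $o(u^2)$ remainder needed to prevent the iterates from overshooting $\overline{x}$. This is precisely what the choice $\eta < 2/(3b)$ (together with $|r(u)| \leq (b/2)u^2$) secures, and it is the step one has to state carefully rather than hand-wave.
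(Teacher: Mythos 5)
Your proof is correct, and the paper itself offers no proof of this theorem --- it is quoted from Elaydi's \emph{Discrete Chaos} --- so there is nothing in the text to diverge from; your Taylor-expansion argument with the two-sided bound $u - \tfrac{3b}{2}u^2 \leq g(u) \leq u - \tfrac{b}{2}u^2$ and monotone convergence is exactly the standard proof of this result. The one point worth stating explicitly is that the Peano-form expansion $g(u) = u - bu^2 + o(u^2)$ only requires $f''(\overline{x})$ to exist, which is all the hypothesis provides, so no extra smoothness assumption is being smuggled in.
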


\section{The Forbidden Set}
Consider equation~(\ref{E1}). If $\alpha =0$, the solution is
trivial. If $\gamma =0$ then equation~(\ref{E1}) reduces to a
linear equation. If $\beta =0$ then the solution is periodic with
period $k+1$. Thus, we assume that all the parameters are nonzero.
A change of variables $x_{n}=\sqrt[k+1]{\frac{\beta}{\alpha
}}y_{n}$ followed by the change $y_{n}=x_{n}$ reduces
equation~(\ref{E1}) to
\begin{eqnarray}\label{E4}
x_{n+1}=\frac{cx_{n-k}}{1+x_{n}x_{n-1}\ldots x_{n-k}}, \ \ \
n=0,1,\ldots
\end{eqnarray}
where $c=\frac{\alpha}{\beta}\sqrt[k+1]{\frac{\beta}{\alpha}}$.
Hence, , we consider equation~(\ref{E4}) instead of
equation~(\ref{E1}), hereafter. Now we are ready to obtain the
forbidden set of equation~(\ref{E4}).

\begin{Theorem}\label{F} Consider equation~(\ref{E4}). Assume that $\mathscr{F}$ is the
forbidden set for this equation. Then
$$\mathscr{F}=\bigcup _{m=0}^{\infty}
\Big\{(f_{1},f_{2},\ldots ,f_{k+1}): f_{1}f_{2}\ldots
f_{k+1}=\frac{-1}{\sum _{i=0}^{m}c^{i}}\Big\}$$

\end{Theorem}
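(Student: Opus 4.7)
The plan is to reduce the problem to a first-order scalar recursion for the product $P_n = x_n x_{n-1}\cdots x_{n-k}$. An initial vector $(f_1,\ldots,f_{k+1})$ belongs to $\mathscr F$ precisely when the iteration of equation~(\ref{E4}) eventually produces a zero denominator, i.e.\ when $P_n = -1$ for some $n\geq 0$. Since
$$P_{n+1} = x_{n+1}x_n\cdots x_{n-k+1} = \frac{x_{n+1}}{x_{n-k}}\,P_n = \frac{c}{1+P_n}\,P_n,$$
the sequence $P_n$ satisfies the Möbius recursion $P_{n+1} = cP_n/(1+P_n)$ with initial value $P_0 = f_1 f_2 \cdots f_{k+1}$. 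Thus the forbidden set is exactly the preimage, under $P_0\mapsto P_0$, of the backward orbit of $-1$ under this scalar map.

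Next I would linearize. Setting $Q_n = 1/P_n$ turns the recursion into the affine one $Q_{n+1} = (1+Q_n)/c$, whose explicit solution is obtained by induction:
$$Q_n = \frac{1}{c^n}\Big(Q_0 + \sum_{i=0}^{n-1}c^i\Big), \quad n\geq 0.$$
(This formula is valid for every $c>0$, including $c=1$, and avoids any case split.) Imposing the forbidden condition $P_m = -1$, equivalently $Q_m = -1$, gives
$$-c^m = Q_0 + \sum_{i=0}^{m-1}c^i, \quad\text{hence}\quad Q_0 = -\sum_{i=0}^{m}c^i, \quad P_0 = \frac{-1}{\sum_{i=0}^{m}c^i}.$$

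Finally, I would translate back to the initial conditions: the event ``$P_m = -1$ for some $m\geq 0$'' is exactly the event that $f_1 f_2 \cdots f_{k+1} = -1/\sum_{i=0}^{m}c^i$ for some $m\geq 0$, and taking the union over $m$ yields the stated formula for $\mathscr F$. A brief remark should verify the converse direction, namely that if $P_0$ takes one of the listed values then the iteration is indeed undefined at step $m+1$ (which follows because the Möbius recursion is deterministic and the formula for $Q_n$ shows $P_j \neq -1$ for $j<m$, so iteration proceeds up to step $m$ and fails there). The only non-routine step is spotting that the $(k+1)$-st order equation~(\ref{E4}) admits the scalar semiconjugate $P_{n+1} = cP_n/(1+P_n)$; once this reduction is in hand, the rest is a one-line computation.
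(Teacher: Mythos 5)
Your proposal is correct and follows essentially the same route as the paper: both arguments hinge on reducing membership in $\mathscr{F}$ to the behaviour of the product $P_n=x_nx_{n-1}\cdots x_{n-k}$ under the Riccati recursion $P_{n+1}=cP_n/(1+P_n)$, with the failure condition $P_m=-1$. The only difference is one of presentation — you linearize via $Q_n=1/P_n$ (legitimate here, since $P_0\neq 0$ propagates and $P_0=0$ is trivially not forbidden) and read both inclusions off the closed form, whereas the paper runs a forward induction for one direction and a backward induction for the converse; the substance is the same.
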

\begin{proof} Assume the initial conditions $x_{-k},
x_{-k+1},\ldots x_{0}$ satisfy
$$x_{-k}x_{-k+1}\ldots x_{0}=\frac{-1}{\sum _{i=0}^{m}c^{i}}$$
for some $m\in \Bbb{N}\cup \{0\}$. Then equation~(\ref{E4})
implies that
$$x_{1}x_{0}\ldots x_{1-k}=\frac{cx_{0}x_{-1}\ldots x_{1-k}x_{-k}}{1+x_{0}\ldots x_{-k}}=\frac{c\left(\frac{-1}{\sum _{i=0}^{m}c^{i}}\right)}{1-\sum _{i=0}^{m}c^{i}}=\frac{-1}{\sum _{i=0}^{m-1}c^{i}}.$$
Therefore, we obtain from equation~(\ref{E4}) that
$$x_{2}x_{1}\ldots x_{2-k}=\frac{cx_{1}x_{0}\ldots x_{2-k}x_{1-k}}{1+x_{1}\ldots x_{1-k}}=\frac{c\left(\frac{-1}{\sum _{i=0}^{m-1}c^{i}}\right)}{1-\sum _{i=0}^{m-1}c^{i}}=\frac{-1}{\sum _{i=0}^{m-2}c^{i}}.$$
Continuing in this fashion by induction we obtain that
$$x_{m}x_{m-1}\ldots x_{m-k}=\frac{-1}{\sum _{i=0}^{m-m}c^{i}}=-1.$$
As a result the iteration process stops at $x_{m+1}$. Conversely,
assume iteration process stops at some point $x_{n_{0}+1}$, i.e.,
$x_{n_{0}}x_{n_{0}-1}\ldots x_{n_{0}-k}=-1$. Then
equation~(\ref{E4}) implies that
$$x_{n_{0}-1}x_{n_{0}-2}\ldots x_{n_{0}-k-1}=\frac{-1}{1+c}.$$
Again using this fact and equation~(\ref{E4}) we obtain that
$$x_{n_{0}-2}x_{n_{0}-3}\ldots x_{n_{0}-k-2}=\frac{-1}{1+c+c^2}.$$
Continuing in this manner we get
$$x_{0}x_{-1}\ldots x_{-k}=\frac{-1}{\sum _{i=0}^{n_{0}}c^{i}}$$
or equivalently $(x_{-k}, x_{-k+1},\ldots x_{0})\in \mathscr{F}$.
This completes our inductive proof.\end{proof}
\section{Linearized Stability}
The first step to understand of the dynamics is to find the equilibria and find their stability type.
In this section we investigate the local asymptotic stability of
the equilibria of equation~(\ref{E4}). Simple calculations show
that origin is always an equilibrium for equation~(\ref{E4}) and
if $c>1$ then it has a second equilibrium
$\overline{x}=\sqrt[k+1]{c-1}$. The following theorem deals with
the local asymptotic stability of the equilibria. Note that
throughout the rest of this paper, the initial conditions are
assumed to be taken out of the forbidden set \(\mathscr{F}.\)

\begin{Theorem}\label{T4}Consider equation~(\ref{E4}). Then,
\begin{description}
    \item[\it{(a)}] origin is locally
    asymptotically stable for $c<1$, while it is unstable when $c>1$.
    \item[\it{(b)}] For the case of $c>1,$ the positive equilibrium
    $\overline{x}=\sqrt[k+1]{c-1}$ is stable.
\end{description}
\end{Theorem}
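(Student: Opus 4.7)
The plan is to apply the linearized stability theorem stated in the preliminaries to both equilibria, which reduces everything to (i) computing the partial derivatives of $f(u_{0},\ldots,u_{k})=cu_{k}/(1+u_{0}u_{1}\cdots u_{k})$ at each equilibrium and (ii) locating the roots of the resulting characteristic polynomial.

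For (a), the key observation is that every partial $\partial f/\partial u_{i}$ with $i<k$ carries a factor of some $u_{j}$ in its numerator (via the quotient rule) and therefore vanishes at the origin, while $\partial f/\partial u_{k}(0,\ldots,0)=c$. The characteristic equation collapses to $\lambda^{k+1}=c$, so every one of the $k+1$ roots has modulus $c^{1/(k+1)}$. This modulus is $<1$ when $c<1$ (giving local asymptotic stability) and $>1$ when $c>1$ (giving instability), which is exactly what (a) asserts.

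For (b), using the equilibrium identity $1+\overline{x}^{k+1}=c$, a direct computation gives $\partial f/\partial u_{i}(\overline{X})=(1-c)/c$ for $i=0,\ldots,k-1$ and $\partial f/\partial u_{k}(\overline{X})=1/c$. Substituting into the characteristic equation and clearing the denominator yields
$$p(\lambda)=c\lambda^{k+1}+(c-1)\bigl(\lambda^{k}+\lambda^{k-1}+\cdots+\lambda\bigr)-1.$$
The crucial algebraic step is to recognize the telescoping identity $(\lambda-1)p(\lambda)=(c\lambda-1)(\lambda^{k+1}-1)$, from which one obtains the clean factorization
$$p(\lambda)=(c\lambda-1)\bigl(1+\lambda+\lambda^{2}+\cdots+\lambda^{k}\bigr).$$
The roots are therefore $\lambda=1/c$ together with the $k$ non-trivial $(k+1)$-st roots of unity. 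When $c>1$, the first root lies strictly inside the open unit disk and the remaining $k$ roots are \emph{simple} and sit on the unit circle; the linear part consequently generates bounded orbits near $\overline{X}$, which yields the (neutral, non-asymptotic) stability claimed.

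The main obstacle is spotting the factorization of $p(\lambda)$: without multiplying by $(\lambda-1)$ one is left staring at a polynomial whose Schur--Cohn or Jury-type analysis at arbitrary order $k$ is unwieldy. A secondary subtlety is that the linearized stability theorem as stated in the preliminaries addresses only the cases ``all roots inside the open unit disk'' or ``some root outside the closed unit disk'', so (b) strictly requires a mild refinement — namely that simple roots on the unit circle still produce (Lyapunov) stability because the linearization is diagonalizable over $\Complex$. Any stronger conclusion (attractivity of the positive equilibrium or of a periodic orbit) must then be pushed through by the semiconjugacy/order-reduction machinery flagged in the abstract.
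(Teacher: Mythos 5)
Your part (a) is correct and coincides with the paper's argument: all partials except $\partial f/\partial u_{k}$ vanish at the origin, the characteristic equation is $\lambda^{k+1}=c$, and the linearized stability theorem settles both cases. Your computation in part (b) is also correct and is in substance the same as the paper's: your factorization $p(\lambda)=(c\lambda-1)(1+\lambda+\cdots+\lambda^{k})$ is exactly the paper's identity $\lambda^{k+1}+\tfrac{c-1}{c}\lambda^{k}+\cdots+\tfrac{c-1}{c}\lambda-\tfrac{1}{c}=\frac{(\lambda^{k+1}-1)(\lambda-1/c)}{\lambda-1}$ after multiplying through by $c(\lambda-1)$, and it correctly exhibits the roots as $1/c$ together with the $k$ nontrivial $(k+1)$-st roots of unity. (For what it is worth, your derivation here is cleaner than the printed proof, which garbles the sign of the hypothesis and the conclusion.)

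The genuine gap is the final inference in (b). Simple eigenvalues on the unit circle with a diagonalizable linearization give bounded orbits for the \emph{linear} system, but they do not imply Lyapunov stability of the equilibrium of the \emph{nonlinear} system: the equilibrium is non-hyperbolic, and the standard counterexample $x_{n+1}=x_{n}+x_{n}^{3}$ (with $f'(0)=1$ simple and on the unit circle, yet $0$ unstable) shows that your proposed ``mild refinement'' is false as a general principle. The paper itself concedes this point in Remark 3.1 (``if $c>1$ then linearization tells us nothing about the stability of the positive equilibrium'') and only actually establishes the stability asserted in (b) in Section 4, Case II, via the semiconjugacy $H(x_{1},\ldots,x_{k+1})=x_{1}\cdots x_{k+1}$ onto the Riccati factor map $\phi(t)=ct/(1+t)$: the global asymptotic stability of $\overline{t}=c-1$ for $\phi$, together with Theorem 1.2, forces every nearby positive orbit onto a $(k+1)$-cycle in the invariant fiber $H^{-1}(c-1)$, which is what yields neutral (non-asymptotic) stability of $\overline{x}$. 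You flag the semiconjugacy machinery as needed only for ``stronger conclusions'' such as attractivity, but in fact it is needed for the stability claim itself; without it your proof of (b) stops at ``the linearization is inconclusive.''
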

\begin{proof}  For part
(a), the linearized equation associated with equation~(\ref{E4})
about origin is $z_{n+1}-cz_{n-k}=0, \ n\geq 0$. Therefore, the
corresponding characteristic equation is $\lambda ^{k+1}-c=0$,
i.e., $\lambda =\sqrt[k+1]{c}$. Since the origin is locally
asymptotically stable if $c<1$ and is unstable if $c>1$.

(b) Simple calculations show that the linearized equation
associated with equation~(\ref{E4}) about the positive
equilibrium $\overline{x}=\sqrt[k+2]{c-1}$ is
\begin{eqnarray*}
z_{n+1}+\frac{c-1}{c}z_{n}+\frac{c-1}{c}z_{n-1}+\ldots +\frac{c-1}{c}z_{n-k+1}-\frac{1}{c}z_{n-k}=0, \ \ \ n=0,1,\ldots
\end{eqnarray*}
Therefore the corresponding characteristic equation is
\begin{eqnarray}\label{E5}
\lambda ^{k+1}+\frac{c-1}{c}\lambda ^{k}+\ldots
+\frac{c-1}{c}\lambda -\frac{1}{c}=0,
\end{eqnarray}
Some algebra show that equation~(\ref{E5}) is equivalent to
\begin{eqnarray}\label{E6}
\frac{(\lambda ^{k+1}-1)(\lambda -1/c)}{\lambda -1}=0, \ \ \
\lambda \neq 1,
\end{eqnarray}
Therefore, $\lambda =1/c$ is one of the roots of
equation~(\ref{E6}). Since $c<1,$ by local asymptotic stability
Theorem we conclude that the positive equilibrium
$\overline{x}=\sqrt[k+1]{c-1}$ is unstable. The proof is complete.
\end{proof}
\begin{Remark}\label{R1} The above theorem is sufficient to completely describe the local dynamics of the equilibrium for the case \(c<1.\) However, for the case \(c\leq 1,\) more investigation are needed.
For $c=1$ the characteristic equation about origin has modulus
equal to one, the linearization fails to analyze the asymptotic
stability of origin. Thus, it may have a complex dynamics near
this point. On the other hand consider equation~(\ref{E6}) and
assume that $c>1$. equation~(\ref{E6}) has a real root $\lambda
=1/c $ with modulus less than one but this equation has  $k+1$
roots in the following form
$$\lambda _{m}=\cos \left(\frac{2m\pi }{k+1}\right)+i\sin \left(\frac{2m\pi }{k+1}\right), \ \ \ m=0,1,\ldots ,k,$$
all with modulus equal to one. So, if $c>1$ then linearization
tells us nothing about the stability of the positive equilibrium.
In the next section we discuss these cases in details.
\end{Remark}

\section{Semiconjugate factorization}

The main purpose of this section is to analyze the local dynamics
near equilibria for the cases \(c\geq 1\) as well as of the
global asymptotic dynamics of the equation~(\ref{E4}). Although
the previous section, using linearization, showed that the origin
is locally asymptotically stable for $c<1$. Yet the global nature
of asymptotic stability can not be inferred from linearization.
Also, we saw that for $c> 1$ the linearized equation about
the positive equilibrium has several roots; all with modulus equal
to unity. As a result, more powerful methods are necessary in
order to analyze the dynamics of equation~(\ref{E4}). In this
section, we apply semiconjugacy analysis to examine the global
nature of equation~(\ref{E4}). The idea is to reduce the order of a higher order difference equation such that the
analysis of the reduced system is feasible. Then, this is helpful when this analysis can facilitate the understanding of the dynamics for the original difference equation.

Let $f(x_{1},x_{2},\ldots ,x_{k+1})=\frac{cx_{k+1}}{1+x_{1}x_{2}\ldots x_{k+1}}$. Consider
the vectorization of $f$ i.e.,
\begin{eqnarray*}
V_{f}(x_{1},x_{2},\ldots ,x_{k+1})=\left(\frac{cx_{k+1}}{1+x_{1}x_{2}\ldots x_{k+1}},x_{1}, \ldots
,x_{k}\right).
\end{eqnarray*}

Set $H(x_{1},x_{2},\ldots ,x_{k+1})=x_{1}x_{2}\ldots x_{k+1}$ and
note that
\begin{eqnarray*}
H(V_{f}(x_{1},x_{2},\ldots
,x_{k+1}))=\frac{cx_{1}x_{2}\ldots x_{k+1}}{1+x_{1}x_{2}\ldots
x_{k+1}}=\frac{cH(x_{1},x_{2},\ldots
,x_{k+1})}{1+H(x_{1},x_{2},\ldots ,x_{k+1})},
\end{eqnarray*}
So the map $H$ makes $V_{f}$ a $(D,H,\phi )$-semiconjugate map with
$D$ being the nonnegative orthant of $\Bbb{R}^{k+1}$, i.e., $D=[0,\infty )^{k+1}$, and the map
\begin{eqnarray*}
\phi (t)=\frac{ct}{1+t},
\end{eqnarray*}
serving as the factor map on $[0,\infty )$. Therefore,
equation~(\ref{E4}) is a semiconjugae factorization of the
well-known Ricatti difference equation $t_{n+1}=\phi
(t_{n})=\frac{ct_{n}}{1+t_{n}}$. In the sequence, we consider
three cases as follow:

\emph{Case I:} $0<c<1$; In this case we claim that
$$0\leq x_{n+1}\leq c^{\lceil n/k+1 \rceil } \max \{x_{-k},x_{-k+1},\ldots ,x_{0}\}, \ \ \ n\geq -k, $$
where $\lceil n\rceil $ is the greatest integer which is less than
or equal to $n$. To prove the above claim note that it is true for
all $-k\leq n \leq -1$. Assume that it is true for all integers
less than or equal to some integer $n(n>-1)$. Then
\begin{eqnarray*}
  x_{n+1} &=& \frac{cx_{n-k}}{1+x_{n}x_{n-1}\ldots x_{n-k}}\\
          &\leq & cx_{n-k} \\
          &\leq & c^{\lceil \frac{n-k}{k+1}\rceil+1 }\max \{x_{-k},x_{-k+1},\ldots
 ,x_{0}\}\\
          &=& c^{\lceil \frac{n+1}{k+1}\rceil }\max \{x_{-k},x_{-k+1},\ldots ,x_{0}\},
\end{eqnarray*}
and this completes our inductive proof. Therefore, in this case
every positive solution of equation~(\ref{E4}) converges
exponentially to origin.

\emph{Case\ II:} $c>1$; In this case the factor map $\phi $ has a
positive equilibrium $\overline{t}=c-1$. Recall that in a Ricatti
equation the positive equilibrium is globally asymptotically
stable (see \cite{KL}. P79). Hence,  $\overline{t}$ is globally
asymptotically stable equilibrium of $\phi $. Also, since the
sequence $\{\phi (t_{0})\}_{n=0}^{\infty },\ t_{0}\in (0,\infty )$
is bounded (this is evident by the global asymptotic stability of
$\overline{t}$), Theorem \ref{T2}\emph{(ii)} implies that the
sequence $\{V_{f}^{n}(x_{0})\}_{n=0}^{\infty }, \ x_{0}\in
(0,\infty )^{k+1}$ is bounded. Thereby, we conclude from Theorem
2\emph{(iv)} that the sequence
$\{V_{f}^{n}(x_{0})\}_{n=0}^{\infty }$ converges to the invariant
fiber $H_{\overline{t}}^{-1}$ since  the sequence $\{\phi
(t_{0})\}_{n=0}^{\infty }$ converges to $\overline{t}$. Now,
consider the fiber $H_{\overline{t}}^{-1}$. If
$(x_{1},x_{2},\ldots x_{k+1})\in H_{\overline{t}}^{-1}$ then some
simple algebra show that
$$V_{f}^{k+1}(x_{1},x_{2},\ldots ,x_{k+1})=(x_{1},x_{2},\ldots ,x_{k+1}),$$
Therefore, every member of $H_{\overline{t}}^{-1}$ is a
$(k+1)$-cycle of $V_{f}$; in other words, it is a periodic orbit of
equation~(\ref{E4}) of period $(k+1)$. Then, in this case every
solution of equation~(\ref{E4}) converges to a $(k+1)$-cycle.

\emph{Case\ III:} $c=1$; In this case origin is the unique
equilibrium of equation~(\ref{E4}) and $\phi $. Some calculations
show that
\begin{eqnarray*}
\phi '(0)=1, \ \ \ \phi ''(0)=-2,
\end{eqnarray*}
Therefore by Theorem \ref{T3} origin is semiasymptotic stable
equilibrium of $\phi $ form the right, {\rm i.e.,} origin attracts the
sequence $\{\phi ^{n}(t_{0})\}$ for all $t_{0}\in (0,\infty )$.
So, by an analysis precisely similar to that of used in the
previous case we conclude, by Theorem 2\emph{(iv)}, that the the
fiber $H_{0}^{-1}$ attracts every solution of equation~(\ref{E4})
since origin attracts every solution of the sequence $\{\phi
^{n}(t_{0})\}$. Now, assume that $(x_{1},x_{2},\ldots ,x_{k+1})\in
H_{0}^{-1}$. Then
\begin{eqnarray*}
x_{1}x_{2}\ldots x_{k+1}=0,
\end{eqnarray*}
and therefore, every solution of equation~(\ref{E4}) either
converges to origin or to a point in one of $(k+1)$ coordinate
planes in $\Bbb{R}^{k+1}$, i.e., the following set
\begin{eqnarray}\label{Cycle}
S=\{(x_{1},x_{2},\ldots ,x_{k+1})| \ \exists \ 1\leq i \leq k+1 \
\ni \ x_{i}=0 \ \}.
\end{eqnarray}
elements of the attracting set \(S\) (note that \(S=\cup S_i\),
where \(S_i\) are the coordinate planes) are \((k+1)\)-cycles of
equation~(\ref{E4}).
 To prove this consider $(x_{1}, \ldots ,x_{i},0,x_{i+1},\ldots  ,x_{k+1})\in S$. Then, it is easy to verify
that
\begin{eqnarray}
V_{f}^{k+1}(x_{1}, \ldots ,x_{i},0,x_{i+1},\ldots ,x_{k+1})=(x_{1}, \ldots ,x_{i},0,x_{i+1},\ldots
,x_{k+1}).
\end{eqnarray}
Consequently, every solution of equation~(\ref{E4}) either
converges to the origin or to a periodic orbit of period $(k+1),$
that is in a coordinate plane of $\Bbb{R}^{k+1}$. We now summarize the
above arguments into the following theorem that is one of the
main result of this paper.
\begin{Theorem}\label{T5}(Basic convergence Theorem)  Assume Equation (\ref{E4}) is given. Then,
\begin{description}
    \item[\it{(i)}] For $0<c<1,$ every positive orbit of equation~(\ref{E4}) converges exponentially to
     origin. This implies that the equation has no other periodic orbit except the trivial fixed point ({\rm i.e.,} origin).
    \item[\it{(ii)}] When $c\geq 1,$ any cycle of the equation~(\ref{E4}) is a \((k+1)\)-cycle and the set of all \((k+1)\)-cycles is given by equation~(\ref{Cycle}). Furthermore 
        \begin{enumerate}
          \item For \(c=1,\) every positive orbit of equation~(\ref{E4})
    either converges to origin or to a $(k+1)$-cycle.
          \item When $c>1,$ every positive solution of equation~(\ref{E4}) converges to a $(k+1)$-cycle.
        \end{enumerate}
\end{description}
\end{Theorem}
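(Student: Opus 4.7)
The plan is to gather the three cases from the preceding discussion into a single clean proof, using different tools for $0<c<1$ and for $c\ge 1$. Part~(i) would fall out immediately from the inductive bound
\[
0\le x_{n+1}\le c^{\lceil n/(k+1)\rceil}\max\{x_{-k},\ldots,x_0\},\qquad n\ge -k,
\]
already established in Case~I: with $0<c<1$, this forces $x_n\to 0$ exponentially. Any nontrivial periodic orbit would then also have to tend to $0$ while being periodic, which is impossible unless it is the fixed point itself; hence origin is the only cycle.

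For~(ii) I would work throughout with the semiconjugacy $H(x_1,\ldots,x_{k+1})=x_1\cdots x_{k+1}$ and the Ricatti factor $\phi(t)=ct/(1+t)$. When $c>1$, the unique positive equilibrium $\overline{t}=c-1$ of $\phi$ is globally asymptotically stable on $(0,\infty)$; Theorem~\ref{T2}(ii) yields boundedness of every positive orbit of $V_f$, and Theorem~\ref{T2}(iv) then gives convergence to the invariant fiber $H^{-1}(c-1)$. When $c=1$, the only equilibrium of $\phi$ is $0$, and Theorem~\ref{T3} applied with $\phi'(0)=1$ and $\phi''(0)=-2<0$ gives that $0$ is semiasymptotically stable from the right, so $\phi^n(t_0)\to 0$ for every $t_0>0$; Theorem~\ref{T2}(iv) then gives convergence of every positive orbit of $V_f$ to the fiber $H^{-1}(0)=S$. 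A short algebraic check shows that whenever $H(x)\in\{0,c-1\}$ one has $V_f^{k+1}(x)=x$, so each attracting fiber consists of $(k+1)$-cycles of equation~(\ref{E4}).

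To close~(ii) I would show that \emph{every} cycle of equation~(\ref{E4}) is a $(k+1)$-cycle by pushing a hypothetical periodic orbit of $V_f$ through $H$: its image under $H$ is a periodic orbit of $\phi$, but $\phi$ is strictly increasing on $[0,\infty)$ and therefore admits no periodic orbits other than its fixed points; hence the orbit lies in one of the invariant fibers $H^{-1}(0)$ or $H^{-1}(c-1)$, each of which was just shown to consist of $(k+1)$-cycles. The main obstacle is the bookkeeping for Theorem~\ref{T2}: one must verify the coercivity $|H(x)|\to\infty$ as $\|x\|\to\infty$ with care, since $H$ vanishes on the coordinate hyperplanes of $[0,\infty)^{k+1}$, and the identity $V_f^{k+1}=\mathrm{id}$ on each fiber, although only a few lines of algebra, is precisely the step that ties the scalar factor dynamics back to the $(k+1)$-order dynamics of the original equation and thereby yields the periodic structure described in~(\ref{Cycle}).
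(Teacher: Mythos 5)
Your proposal follows the paper's own route almost verbatim: part (i) from the inductive bound of Case I, and part (ii) from the semiconjugacy $H(x)=x_1\cdots x_{k+1}$ with Riccati factor $\phi(t)=ct/(1+t)$, invoking Theorem~\ref{T2}(ii),(iv) for $c>1$ and Theorem~\ref{T3} for $c=1$. The one genuinely new ingredient is your classification of \emph{all} cycles by pushing a hypothetical periodic orbit through $H$ and using that a strictly increasing interval map has no periodic points other than fixed points; the paper asserts ``any cycle is a $(k+1)$-cycle'' but never argues it, so this is a real improvement.

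Two concrete problems remain. First, your ``short algebraic check'' that $V_f^{k+1}=\mathrm{id}$ whenever $H(x)\in\{0,c-1\}$ fails on $H^{-1}(0)$ when $c>1$: the product stays $0$ along such an orbit, so $V_f^{k+1}(x)=(cx_1,\ldots,cx_{k+1})$, and the only periodic point in that fiber is the origin. (This also shows that the theorem's claim that the $(k+1)$-cycles are exactly the set $S$ of~(\ref{Cycle}) is correct only for $c=1$; for $c>1$ the nontrivial cycles form the fiber $H^{-1}(c-1)$, exactly as your monotonicity argument produces.) Second, the coercivity hypothesis of Theorem~\ref{T2}(ii) that you flag as ``bookkeeping'' genuinely fails and cannot be verified: on $[0,\infty)^{k+1}$, and even on the open orthant (take $x=(n,1/n,1,\ldots,1)$), one has $\norm{x}\to\infty$ while $H(x)$ stays bounded, so boundedness of $\{V_f^n(x_0)\}$ does not follow from~\ref{T2}(ii); the paper commits the same error silently. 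A direct substitute: with $p_n=x_nx_{n-1}\cdots x_{n-k}$, equation~(\ref{E4}) gives $x_{n+1}=x_{n-k}\,p_{n+1}/p_n$, and telescoping along residue classes yields $x_{j+(k+1)M}=x_j\,p_{j+(k+1)M}/p_j$; since $p_{n+1}=\phi(p_n)\to c-1>0$ for $c>1$, each subsequence converges, which gives boundedness and convergence to a $(k+1)$-cycle without Theorem~\ref{T2} at all.
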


\end{document}